\numberwithin{equation}{section}
\newcommand{\dif}{\mathrm{d}}
\newcommand{\dd}{\mathrm{d}}
\newcommand{\mathd}{\mathrm{d}}
\newcommand{\ep}{\varepsilon}
\newcommand{\al}{\alpha}
\newcommand{\vp}{\varphi}
\renewcommand{\leq}{\leqslant}
\renewcommand{\geq}{\geqslant}
\newcommand{\R}{\mathbb R}
\newcommand{\N}{\mathbb N}
\newcommand{\cl}{\mathcal L}
\DeclareMathOperator{\diver}{div}
\newtheorem{theorem}{Theorem}[section]
\newtheorem{lemma}[theorem]{Lemma}
\theoremstyle{definition}
\newtheorem{definition}[theorem]{Definition}
\theoremstyle{remark}
\theoremstyle{remark}
\newcommand{\assign}{:=}
\newcommand{\tmop}[1]{\ensuremath{\operatorname{#1}}}
\begin{document}

\title[On the rough Gronwall lemma and its applications]{On the rough Gronwall lemma and its applications}

\author{Martina Hofmanov\'a}
\address[M. Hofmanov\'a]{Institute of Mathematics,  Technical University Berlin, Stra\ss e des 17. Juni 136, 10623 Berlin, Germany}
\email{hofmanov@math.tu-berlin.de}

\thanks{Financial support by the DFG via Research Unit FOR 2402 is gratefully acknowledged.}

\begin{abstract}
We present a rough path analog of  the classical  Gronwall Lemma   introduced recently by A.~Deya, M.~Gubinelli, M.~Hofmanov\'a, S.~Tindel in~[\href{https://arxiv.org/abs/1604.00437}{arXiv:1604.00437}] and discuss two of its applications. First, it is applied in the framework of rough path driven PDEs in order to establish energy estimates for weak solutions. Second, it is used in order to prove uniqueness for reflected rough differential equations.
\end{abstract}

\subjclass[2010]{60H15, 35R60, 35L65}

\keywords{rough paths, rough partial differential equations, reflected rough differential equations, rough Gronwall lemma}

\date{\today}

\maketitle

\section{Introduction}

The theory of  rough paths was introduced by Terry Lyons in his seminal work \cite{MR1654527}. It can be briefly described as an extension of the classical theory of controlled differential equations which is robust enough to allow for a deterministic treatment of stochastic differential equations. To be more precise and in order to fix the ideas, let us consider a controlled differential equation of the form
\begin{equation}\label{eq:}
\dif y_t=f(y_t)\dif x_t,
\end{equation}
where the driving signal $x$ possesses only a limited regularity, namely, it does not have finite variation. One requirement for any sensible (deterministic) theory of such equations is certainly robustness: approximating $x$ by smooth paths, say $(x^n)$, and solving the corresponding approximate problems
$$\dif y^n_t=f(y^n_t)\dif x^n_t,$$
we expect that  solutions $y^n$ remain close, in a suitable sense, to a (unique) solution $y$ of the original problem \eqref{eq:}. In other words, we look for suitable topologies that render the solution map $x\mapsto y$ continuous, while allowing for driving signals of low regularity. The first positive answer to this problem was given in 1936 by Young  \cite{young}, who introduced an extension of the Stieltjes integral for paths of finite $p$-variation with $p<2$. Roughly speaking, he showed that  one can define an integral of $y$ against $x$ such that the mapping
$$V^p([0,T])\times V^p([0,T])\to V^p([0,T]),\qquad (x,y)\mapsto \int_0^\cdot y_s\,\dif x_s$$
is continuous  provided $p<2$, where $V^p([0,T])$ stands for the space of continuous paths with finite $p$-variation on $[0,T]$.

However, one of the most interesting examples of a driving signal, namely, the Brownian motion, is not covered by this result. Indeed, it can be shown that its sample paths have unbounded $p$-variation for any $p\leq 2$.
Moreover, it turns out that the situation is even more delicate: Lyons \cite{Lyo91} proved that there exists \emph{no }Banach space $\mathcal{B}$ containing sample paths of Brownian motions 
such that the map
$$(x,y)\mapsto \int_0^\cdot y_t\,\dot{x}_t\,\dif t$$
defined on smooth functions extends to a continuous map on $\mathcal{B}\times\mathcal{B}$.
The breakthrough by Lyons \cite{MR1654527} was then based on the insight that an important part of information on the signal $x$  is missing, due to its low regularity. In particular, he showed that if the driving signal $x$ has finite $p$-variation for $p\in [2,3)$, the continuity of the solution map to \eqref{eq:} can be recovered by enhancing $x$ by a second component $X^2$, the so-called L\'evy's area, which corresponds to the iterated integral
$$X^2_{st}=:\int_s^t (x_r-x_s)\otimes\dif x_r.$$
Consequently, the solution map to \eqref{eq:}, that is $(x,X^2)\mapsto y$, can be shown to be continuous with respect to appropriate topologies.

Remark that the above lines shall be understood as follows: apart from the path $x$ we are given another  datum $X^2$ which satisfies certain analytic and algebraic properties and which plays the role of the iterated integral $x$ against $x$. Note that for instance in the case of a Brownian motion we still face the same issue as before and the iterated integral cannot be defined by deterministic arguments. However, the striking advantage now is that the only integral that  needs to be constructed is the iterated integral of the Brownian motion itself and this can be done via probabilistic arguments. Therefore, we are able to separate the analytic and probabilistic part of solving stochastic differential equations. In the first step, we use probability (or any other available tool for the particular driving signal at hand) to construct the iterated integral. In the second step, we fix one realization of the process and its iterated integral and proceed deterministically.

\smallskip

Since its introduction, the rough path theory has found a large number of applications and tremendous progress has been made in application of rough path ideas to ordinary as well as partial differential equations driven by rough signals. We refer the reader for instance to the works by Friz et al. \cite{CF,CFO}, Gubinelli--Tindel \cite{P1:DGT,P1:GLT,P1:GT}, Gubinelli--Imkeller--Perkowski \cite{GIP14}, Hairer \cite{ha14} for a tiny sample of the exponentially growing literature on the subject. In view of these exciting developments it is remarkable that many basic PDE methods have not yet found their rough path analogues. For instance, until recently it was an open problem how to construct (weak) solutions to RPDEs using energy methods.

In \cite{BG,DGHT} we started a long term research programme where these questions will be addressed.
One of the aims is to develop a theory applicable to a wide class of RPDEs by following the standard PDE strategies in order to obtain  existence and uniqueness results. 
In \cite{DGHT}, we introduced the general framework and developed innovative a priori estimates based on a new rough Gronwall lemma argument. The theory was applied to conservation laws with rough flux. Moreover, these new techniques already proved sufficiently flexible  and useful  as in \cite{DGHT2} we were able to establish uniqueness for reflected rough differential equations, a problem which remained open in the literature as a suitable Gronwall lemma in the context of rough paths was missing.
The aim of the present paper is to discuss the main ideas of \cite{DGHT,DGHT2} in simple terms  and to make the link between the two applications. It is expected that the framework will find further applications in future.

\section{Intrinsic notion of solution}

\subsection{Notation}

First of all, let us recall the definition of the increment operator, denoted by $\delta$. If $g$ is a path defined on $[0,T]$ and $s,t\in[0,T]$ then $\delta g_{st}:= g_t - g_s$, if $g$ is a $2$-index map defined on $[0,T]^2$ then $\delta g_{sut}:=g_{st}-g_{su}-g_{ut}$. For two quantities $a$ and $b$ the relation $a\lesssim_{x} b$ means $a\leq c_{x} b$, for a constant $c_{x}$ depending on a (possibly multidimensional) parameter $x$.

In the sequel, given an interval $I$ we call a \emph{control on $I$} (and denote it by $\omega$) any superadditive map on $\Delta_I:=\{(s,t)\in I^2: \ s\leq t\}$, that is, any map $\omega: \Delta_I \to [0,\infty[$ such that, for all $s\leq u\leq t$,
$$\omega(s,u)+\omega(u,t) \leq \omega(s,t).$$
We will say that a control is \emph{regular} if $\lim_{|t-s|\to 0} \omega(s,t) = 0$. Also, given a control $\omega$ on an interval $I=[a,b]$, we will use the notation $\omega(I):=\omega(a,b)$. 
Given a time interval $I$, a parameter $p>0$, a Banach space $E$  we denote by $\overline V^p_1(I;E)$ the space of finite $p$-variation functions taking values in $E$. The corresponding seminorm is denoted by
$$\|g\|_{\overline V^p_1(I;E)}:=\sup_{(t_i) \in \mathcal{P}(I)} \left( \sum_{i} |g_{t_i} - g_{t_{i+1}}|^p \right)^{\frac{1}{p}},$$
where $\mathcal{P}(I)$ denotes the set of all partitions of the interval $I$.
If the right hand side is finite then
$$\omega_g(s,t) = \|g\|_{\overline V^p_1(I;E)}^p$$ defines a control on $I$, and we denote by $ V^p_1(I;E)$ the set of elements $g\in \overline V^p_1(I;E)$ for which $\omega_g$ is regular on $I$. 
We denote by $\overline V^p_2(I;E)$ the set of two-index maps $g : I\times I \to E$ with left and right limits in each of the variables and for which there exists a control $\omega$ such that
$$
|g_{st}| \leq \omega(s,t)^{\frac{1}{p}}
$$
for all $s,t\in I$. We also define the space $\overline V^p_{2,\text{loc}}(I;E)$ of maps $g:I\times I\to E$ such that there exists a countable covering $\{I_k\}_k$ of $I$ satisfying $g \in \overline V^p_2(I_k;E)$ for any $k$. We write $g\in  V^p_2(I;E)$ or $g\in  V^p_{2,\text{loc}}(I;E)$ if the control can be chosen regular.

\subsection{Rough drivers}

To begin with, let us introduce the notion of a rough path. For a thorough introduction to the theory of rough paths we refer the reader to the monographs \cite{{friz_course_2014}, FV, MR2314753}.

\begin{definition}\label{defi-rough-path}
Let $d \in\N $, $p\in[2,3)$. A continuous $p$-rough path is a  pair 
\begin{equation}\label{p-var-rp}
X=(X^1,X^2) \in V^p_2 ([0,T];\mathbb{R}^{d}) \times V^{p/2}_2 ([0,T]; \mathbb{R}^{d\times d}) 
\end{equation}
that satisfies  Chen's relation 
\begin{equation*}\label{chen-rela}
  \delta X^{2}_{sut}=X^{1}_{su} \otimes X^{1}_{ut} \ , \qquad s\leq u\leq t\in [0,T]  .
\end{equation*}
A rough path $X$ is said to be geometric if it can be obtained as the limit in the $p$-variation topology given in (\ref{p-var-rp}) of a sequence of rough paths  $X^\ep=(X^{\ep,1},X^{\ep,2})$ explicitly defined as
$$X^{\ep,1}_{st}\assign \delta x^{\ep}_{st} \ , \qquad X^{\ep,2}_{st}\assign \int_s^t \delta x^{\ep}_{su} \otimes \mathd x^{\ep}_u \ ,$$
for some smooth path $x^\ep:[0,T] \to \mathbb{R}^d$. 
\end{definition}

Throughout this paper, we are only  concerned with geometric rough paths. A pleasant advantage is a first order chain rule formulae similar to the one known for smooth paths. Recall that this is not true within the It\^o stochastic integration theory, where only a (second order) It\^o formula is available. However, for the Stratonovich stochastic integral, the first order chain rule holds true. Thus  in case of a Brownian motion we  employ Stratonovich integration for the  construction of the iterated integrals of a geometric rough path, whereas the It\^o integral leads to nongeometric setting.

We are now in a position to provide a clear interpretation of the controlled equation \eqref{eq:} in the rough path setting.

\begin{definition}\label{def:sol-rrde1}
Let  $y_{\text{in}}\in \R^N$, let $f :\mathbbm{R}^N \to \mathcal{L} (\mathbbm{R}^d ;\mathbbm{R}^N)$ be a differentiable function and let $X=(X^1,X^2)$ be a continuous $p$-rough path with $p\in[2,3)$. A path $y\in  V^p_1 ([0, T] ; \mathbbm{R}^d) $  solves the 
rough differential equation
$$
\dif y = f(y)\,\dif X,\qquad y_0=y_{\text{in}},
$$
if there exists a 2-index map $y^\natural \in V^{p/3}_{2, \text{loc}}([0,T];\mathbbm{R}^N)$ such that for all $s,t\in [0,T]$, we have
\begin{equation}\label{eq:intr}
   \delta y_{s t} = f (y_s) {X}^{1}_{s t} + f_{2} (y_s)
{X}^{2}_{s t} +y^\natural_{st}, \qquad y_0=y_{\text{in}},
\end{equation}
where we have set $f_{2,ij} \assign f'_i f_j $.
\end{definition}

This formulation has an intuitive appeal, however note that more precisely it should  be understood   as saying that $y$ is a solution of the equation if 
\begin{equation*}
y^{\natural}_{s t}: =  \delta y_{s t} - f (y_s) {X}^{1}_{s t} -f_{2} (y_s)
{X}^{2}_{s t}
 \end{equation*}
belongs to $V^{p/3}_{2,\text{loc}}([0,T],\R^N)$. In other words, for every path $y$ one can define $y^\natural$ by the above formula, however, this would in general not be a remainder, that is, it would not have the required time regularity.

In the previous example of a rough differential equation we considered a noise term which was irregular in time and its coefficient contained nonlinear but bounded dependence on the solution. Nevertheless, in the PDE theory one is often lead to differential, i.e. unbounded, operators. As a model example, we may think of a rough transport equation
\begin{equation}\label{eq:tr}
\dif u=V\cdot\nabla u\,\dif X,\qquad u_0=u_{\text{in}}.
\end{equation}
In order to treat these equations, we introduce the notion of an unbounded rough driver. It can be regarded as an operator valued rough path taking values in a suitable space of unbounded operators.

In what follows, we call a \emph{scale} any sequence $\big(E_n,\lVert\cdot\rVert_n\big)_{n\in\N_0}$ of Banach spaces such that $E_{n+1}$ is continuously embedded into $E_n$. Besides, for $n\in\N_0$ we denote by $E_{-n}$ the topological dual of $E_n$. Note that it is necessary to distinguish between $E_0$ and its dual $E_{-0}$ as they are generally different spaces.

\begin{definition}
\label{def:urd}
Let $p\in [2,3)$ be given. A continuous unbounded $p$-rough driver with respect to the scale $\big(E_n,\lVert\cdot\rVert_n\big)_{n\in\N_0}$, is a pair ${A} = \big(A^1,A^2\big)$ of $2$-index maps such that
$$A^1_{st}\in  \cl(E_{-n},E_{-(n+1)}) \ \ \text{for} \ \ n\in \{0,2\}, 
\qquad  A^2_{st} \in \cl(E_{-n},E_{-(n+2)}) \ \  \text{for} \ \ n\in \{0,1\},$$
and there exists a continuous control $\omega_A$ on $[0,T]$ such that for every $s,t\in [0,T]$,
\begin{align*}
\lVert A^1_{st}\rVert_{\cl(E_{-n},E_{-(n+1)})}^p &\leq\omega_A(s,t) \qquad \text{for}\ \ n\in \{0,2\}, \\
\lVert A^2_{st}\rVert_{\cl(E_{-n},E_{-(n+2)})}^{p/2} &\leq\omega_A(s,t) \qquad \text{for}\ \ n\in \{0,1\},
\end{align*}
and, in addition, Chen's relation holds true, that is,
\begin{equation*}
\delta A^1_{sut}=0,\qquad\delta A^2_{sut}=A^1_{ut}A^1_{su},\qquad\text{for all}\quad 0\leq s\leq u\leq t\leq T.
\end{equation*}
\end{definition}

It can be checked that under sufficient regularity assumptions on the family of vector fields $V$, the transport noise in \eqref{eq:tr} defines an unbounded $p$-rough driver  in the scale $W^{n,2}(\R^N)$ by
\begin{equation*}
A^1_{st}u:=X^{1,k}_{st} \,V^k \cdot\nabla u,\qquad A^2_{s t}u:= X^{2,jk}_{st}\, V^k\cdot\nabla (V^j\cdot\nabla u).
\end{equation*}
Here we employ the Einstein summation convention over repeated indices.
Consequently, in analogy to Definition \ref{def:sol-rrde1} we may formulate the notion of weak solution to \eqref{eq:tr} as follows.

\begin{definition}\label{def:gen}
Let $A = \big(A^1,A^2\big)$ be a continuous unbounded $p$-rough driver with respect to the scale $(E_n)$. A path $u:[0,T] \to E_{-0}$ is  a weak solution to \eqref{eq:tr} provided there exist $u^\natural \in V^{p/3}_{2,\text{loc}}([0,T],E_{-3})$ such that, for every $s,t\in [0,T]$, $s<t$, and every test function $\varphi\in E_3$ it holds
\begin{equation}\label{eq:gen2a}
(\delta u)_{st}(\varphi)=u_s(\{A^{1,*}_{st}+A^{2,*}_{st}\}\varphi)+u^\natural_{st}(\varphi), \qquad u_0=u_{\text{in}}.
\end{equation}

\end{definition}

The main difficulty in working with the intrinsic formulations \eqref{eq:intr} and \eqref{eq:gen2a} lies in the remainders $y^\natural$ and $u^\natural$, respectively. Indeed, all the other terms  are explicit and do not even contain any time integration. Therefore, estimations of these terms are very straightforward. On the other hand, the only information available so far on the remainders $y^\natural$ and $u^\natural$   are the respective equations \eqref{eq:intr} and \eqref{eq:gen2a} and they have to be carefully investigated in order to obtain a priori estimates for the solutions $y$ and $u$.

\section{A priori estimates for rough partial differential equations}

In view of the possible applications, it is necessary to allow for (deterministic) drift terms. One could then formulate equations in the general form
\begin{equation}\label{eq:gen2}
(\delta g)_{st}(\varphi)=(\delta \mu)_{st}(\varphi)+g_s(\{A^{1,*}_{st}+A^{2,*}_{st}\}\varphi)+g^\natural_{st}(\varphi),
\end{equation}
with the drift being for instance another transport term or a second order (possibly nonlinear) elliptic operator, i.e.
$$
\mu(\dif t)=V_0\cdot\nabla g\,\dif t,\qquad \mu(\dif t)=\diver(A(x,g,\nabla g)\nabla g)\,\dif t.
$$
In this case, the rule of thumb is to integrate whenever possible, that is, we do not consider a local approximation by Riemann sums of the drift  in \eqref{eq:gen2} (as we do for the rough integral) but rather the increments of the integral itself.

The key result establishing the a priori bounds on the remainder $g^\natural$ reads as follows. The proof of its   more general form is presented in \cite[Theorem 2.5]{DGHT}.

\begin{theorem}\label{prop:apriori}
Let $I$ be a subinterval of $[0,T]$. Consider a path $\mu \in \overline V_1^1(I;E_{-1})$ satisfying for some control $\omega_\mu$ and for every $\varphi \in E_1$
\begin{equation*}\label{cond-mu-n}
 |(\delta \mu)_{st}(\varphi)|\leq \omega_\mu(s,t)\, \|\varphi\|_{E_1} .
\end{equation*}
Let  $g$ be a solution of the equation \eqref{eq:gen2} on $I$ such that $g$ is controlled over the whole interval $I$, that is, $g^\natural\in V_2^{p/3}(I,E_{-3})$.
Then there exists a constant $L>0$ such that if $\omega_A(I)\leq L$ then for all $s<t\in I$
\begin{equation}\label{apriori-bound} 
\begin{split}
 \|g^{\natural}_{st}\|_{E_{-3}}
 \lesssim_{A,I}  \|g\|_{L^\infty(s,t;E_{-0})}\,\omega_A(s,t)^{\frac{3}{p}}+\omega_\mu(s,t)\omega_A(s,t)^{\frac{3-p}{p}}.
\end{split}
\end{equation}
\end{theorem}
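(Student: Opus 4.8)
The plan is to convert the defining relation \eqref{eq:gen2} into a self-improving estimate: difference it, re-insert the equation, and then close it with a sewing argument that exploits the fact that $g^\natural$ lives in $V_2^{p/3}$ with $p/3<1$ and is hence negligible at small scales.

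First I would compute $\delta g^\natural_{sut}$. Since $\delta$ annihilates genuine increments, $\delta(\delta g)=\delta(\delta\mu)=0$, so only the rough term of \eqref{eq:gen2} survives, and Chen's relations in adjoint form, $A^{1,*}_{st}=A^{1,*}_{su}+A^{1,*}_{ut}$ and $A^{2,*}_{st}=A^{2,*}_{su}+A^{2,*}_{ut}+A^{1,*}_{su}A^{1,*}_{ut}$, give for $\varphi\in E_3$
\[\delta g^\natural_{sut}(\varphi)=(\delta g)_{su}\big(\{A^{1,*}_{ut}+A^{2,*}_{ut}\}\varphi\big)-g_s\big(A^{1,*}_{su}A^{1,*}_{ut}\varphi\big).\]
The test function $\{A^{1,*}_{ut}+A^{2,*}_{ut}\}\varphi$ only belongs to $E_1$, so I cannot feed it back into \eqref{eq:gen2} directly; instead I would work with the reduced equation at level $E_1$, that is, with $g^{1,\natural}_{st}:=(\delta g)_{st}-(\delta\mu)_{st}-g_s(A^{1,*}_{st}\cdot)$, an element of $E_{-1}$ which restricted to $E_3$ equals $g_s(A^{2,*}_{st}\cdot)+g^\natural_{st}$, using that $(\delta g)_{su}(\psi)=(\delta\mu)_{su}(\psi)+g_s(A^{1,*}_{su}\psi)+g^{1,\natural}_{su}(\psi)$ holds for $\psi\in E_1$. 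Substituting this with $\psi=\{A^{1,*}_{ut}+A^{2,*}_{ut}\}\varphi$, the two copies of $g_s(A^{1,*}_{su}A^{1,*}_{ut}\varphi)$ cancel -- this cancellation is what rescues an otherwise critical term -- leaving
\[\delta g^\natural_{sut}(\varphi)=(\delta\mu)_{su}\big(\{A^{1,*}_{ut}+A^{2,*}_{ut}\}\varphi\big)+g_s\big(A^{1,*}_{su}A^{2,*}_{ut}\varphi\big)+g^{1,\natural}_{su}\big(\{A^{1,*}_{ut}+A^{2,*}_{ut}\}\varphi\big).\]

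Next I would estimate the three terms using the bounds of Definition \ref{def:urd} (interpolating the adjoints through $E_1,E_2$ wherever a composition forces it) together with the hypothesis on $\mu$, obtaining respectively something like $\omega_\mu(s,u)\,\omega_A(u,t)^{1/p}$, $\|g\|_{L^\infty(s,t;E_{-0})}\,\omega_A(s,u)^{1/p}\omega_A(u,t)^{2/p}$, and $\|g^{1,\natural}\|_{\overline V_2^{p/2}(s,t;E_{-1})}\,\omega_A(u,t)^{1/p}$; a strictly parallel computation controls $\delta g^{1,\natural}_{sut}$, so that one closes the estimate for the pair $(g^\natural,g^{1,\natural})$ with every explicit power of $\omega_A$ strictly positive. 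Because $g^\natural\in V_2^{p/3}(I;E_{-3})$ and $p/3<1$, the Riemann sums $\sum_i g^\natural_{t_it_{i+1}}$ vanish as the mesh shrinks, so $g^\natural$ coincides with its own ``negligible part'' and the sewing lemma turns the bound on $\delta g^\natural$ into
\[\|g^\natural_{st}\|_{E_{-3}}\lesssim_{A,I}\|g\|_{L^\infty(s,t;E_{-0})}\,\omega_A(s,t)^{\frac 3p}+\omega_\mu(s,t)\,\omega_A(s,t)^{\frac{3-p}{p}}+\|g^\natural\|_{V_2^{p/3}(s,t;E_{-3})}\,\omega_A(s,t)^{\theta}\]
for some $\theta>0$ (and an analogous bound for $g^{1,\natural}$). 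The first two terms on the right are superadditive in $(s,t)$, while the last carries a positive power of $\omega_A$; choosing $L$ so small that the constant in front of $\omega_A(I)^\theta$ is at most $\tfrac12$, I would absorb the last term into the left-hand side on any subinterval with $\omega_A\leq L$ -- first on $I$ itself, then on all $s<t$ in $I$ by superadditivity of the resulting control -- which yields \eqref{apriori-bound}.

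I expect the main obstacle to be the scale bookkeeping of the two middle steps. Because each of $A^1,A^2$ costs one level of regularity, the equation can only be resubstituted in its reduced $E_1$-form, and one must both (i) establish that the intermediate remainder $g^{1,\natural}$ genuinely has the $p/2$-variation regularity in $E_{-1}$ used above, and (ii) keep careful track of which composition of adjoints lands in which $E_n$ so that, after the $A^{1,*}A^{1,*}$ cancellation, no surviving factor of $\omega_A$ carries a nonpositive exponent -- this is exactly what makes the sewing step legitimate and the self-referential remainder terms absorbable under the smallness assumption $\omega_A(I)\leq L$.
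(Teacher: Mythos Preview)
Your proposal is correct and follows precisely the approach of \cite[Theorem~2.5]{DGHT}, which is where the paper defers for the proof: compute $\delta g^\natural$, introduce the intermediate remainder $g^{1,\natural}$ (called $g^\sharp$ there) to trigger the Chen cancellation of the critical $A^{1,*}_{su}A^{1,*}_{ut}$ term, bound $\delta g^\natural$ and $\delta g^\sharp$ as a coupled system, apply the sewing lemma using $p/3<1$, and absorb the self-referential term via the smallness condition $\omega_A(I)\leq L$. Your identification of the scale bookkeeping---in particular that $g^{1,\natural}$ must be estimated in $E_{-2}$ (not $E_{-1}$) to obtain the needed $p/2$-variation, and that the $g^{1,\natural}_{su}(A^{2,*}_{ut}\varphi)$ term has to be handled through the coupled estimate for $g^{1,\natural}$ rather than directly---is exactly the delicate point in the original argument.
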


Let us explain the application of the above result on the example of a heat equation with transport noise
$$
\dif u=\Delta u\,\dif t+V\cdot \nabla u\,\dif X.
$$
The ultimate goal is to derive the energy estimates for the solution known from the classical (deterministic) PDE theory. That is, we intend to show that $u$ belongs to $ L^\infty(0,T;L^2(\R^N))\cap L^2(0,T;W^{1,2}(\R^N))$. To this end, one first derives the equation for $u^2$, which can be done rigorously for instance on the level of smooth approximations. It can  be formulated in the scale $E_n=W^{n,\infty}(\R^N)$  as
\begin{align}\label{u2}
\begin{aligned}
\delta  u^2(\varphi )_{s t} &=-2\int_{s}^{t} 
      |\nabla u_r|^2 ( \varphi)  \,\dd r-2\int_s^t(u\nabla u_r)(\nabla \vp)\,\dd r+ u^2_{s} (A^{1,\ast}_{s t} \varphi ) +  u^2_{s} (A^{2, \ast}_{s t} \varphi
     ) + u^{2,\natural}_{s t}( \varphi ).
\end{aligned}
\end{align}
Theorem \ref{prop:apriori} then applies with
$$
\omega_\mu(s,t)=\int_s^t \|\nabla u_r\|_{L^2}^2 \,\dd r+\bigg(\int_s^t\|\nabla u_r\|_{L^2}^2\,\dd r \bigg)^\frac{1}{2}\bigg(\int_s^t\| u_r\|_{L^2}^2\,\dd r \bigg)^\frac{1}{2}.
$$
This is the core of our rough Gronwall lemma argument, which is then concluded using the following result, whose proof can be found in \cite[Lemma 2.7]{DGHT}.

\begin{lemma}[Rough Gronwall Lemma]\label{lemma:basic-rough-gronwall}
Fix a time horizon $T>0$ and let $G:[0,T] \to [0,\infty)$ be a path such that for some constants $C,L>0$, $\kappa\geq 1$ and some controls $\omega_1,\omega_2$ on $[0,T]$ with $\omega_1$ being  regular, one has
\begin{equation*}
\delta G_{st} \leq C \Big(\sup_{0\leq r\leq t} G_r \Big) \, \omega_1(s,t)^{\frac{1}{\kappa}} + \omega_2(s,t),
\end{equation*}
for every $s<t\in [0,T]$ satisfying $\omega_1(s,t) \leq L$. Then it holds
$$\sup_{0\leq t\leq T} G_t \leq 2 \exp\Big( \frac{\omega_1(0,T)}{\al L}\Big) \cdot \Big\{ G_0+\sup_{0\leq t\leq T}\Big(\omega_2(0,t) \, \exp\Big(-\frac{\omega_1(0,t)}{\al L}\Big)\Big) \Big\} ,$$
where $\al$ is defined as 
\begin{equation*}
\al=\min\left(1, \, \frac{1}{L (2Ce^2)^{\kappa} }\right).
\end{equation*}
\end{lemma}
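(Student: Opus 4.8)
The plan is to follow the classical proof of Gronwall's inequality, adapted to deal with the fact that the hypothesis only holds on small intervals (those on which $\omega_1(s,t)\le L$). First I would fix the time horizon $T$ and define an auxiliary function that discounts $G$ by the exponential weight, namely $H_t := G_t\,\exp(-\omega_1(0,t)/(\alpha L))$, and aim to control $\sup_{0\le t\le T} H_t$. The key estimate to establish is a local one: on any interval $[s,t]$ with $\omega_1(s,t)\le L$, the hypothesis $\delta G_{st}\le C(\sup_{0\le r\le t}G_r)\,\omega_1(s,t)^{1/\kappa}+\omega_2(s,t)$ together with $\omega_1(s,t)\le L$ gives $\omega_1(s,t)^{1/\kappa}\le L^{1/\kappa-1}\omega_1(s,t)$, so that $\delta G_{st}\le C L^{1/\kappa-1}(\sup_{0\le r\le t}G_r)\,\omega_1(s,t)+\omega_2(s,t)$; this linearization in $\omega_1$ is what makes the exponential comparison work.

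Next I would partition $[0,T]$ into consecutive subintervals $0=\tau_0<\tau_1<\cdots<\tau_n=T$ chosen so that $\omega_1(\tau_{i},\tau_{i+1})$ is comparable to (say, equal to) $L$ on each piece except possibly the last; superadditivity of $\omega_1$ guarantees $n\le \omega_1(0,T)/L+1$, which is where the factor $\exp(\omega_1(0,T)/(\alpha L))$ ultimately originates. On each such subinterval I would iterate the local estimate to compare $G_{\tau_{i+1}}$ with $G_{\tau_i}$, picking up a multiplicative factor like $(1+CL^{1/\kappa-1}\omega_1(\tau_i,\tau_{i+1})+\dots)$ which is bounded by something like $2C e^2$ raised to an appropriate power — this is exactly the quantity appearing in the definition of $\alpha$. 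Telescoping over $i=0,\dots,n-1$ and bounding the product of these factors by an exponential yields the claimed bound, with the $\omega_2$ contribution entering additively and getting dressed by the same exponential weight, producing the $\sup_{0\le t\le T}(\omega_2(0,t)\exp(-\omega_1(0,t)/(\alpha L)))$ term.

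The delicate point — and the main obstacle — is the bookkeeping inside a single subinterval $[\tau_i,\tau_{i+1}]$: the hypothesis bounds $\delta G_{st}$ in terms of $\sup_{0\le r\le t}G_r$, a running supremum over the whole past, not just over $[\tau_i,t]$, so one cannot naively treat each subinterval in isolation. The standard fix is to prove by induction on $i$ a bound of the form $\sup_{0\le r\le \tau_{i+1}}G_r\le \Phi_i(G_0,\omega_2)$ for an explicit $\Phi_i$ growing geometrically in $i$, using that within $[\tau_i,\tau_{i+1}]$ one has $\sup_{0\le r\le t}G_r\le \max(\sup_{0\le r\le \tau_i}G_r,\ \sup_{\tau_i\le r\le t}G_r)$ and that the second term is itself controlled by $G_{\tau_i}$ plus small corrections via the local estimate applied on $[\tau_i,r]$. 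Getting the constants to close so that the geometric ratio is at most $(2Ce^2)^\kappa$ per step — and hence the total factor is $\exp(\omega_1(0,T)/(\alpha L))$ with the stated $\alpha$ — requires choosing the subinterval lengths and absorbing the lower-order terms carefully; the regularity of $\omega_1$ is used precisely to guarantee that a partition with $\omega_1(\tau_i,\tau_{i+1})=L$ (or $\le L$ on the final piece) actually exists. The constant $2$ in front and the $e^2$ inside $\alpha$ are artifacts of these absorptions and I would not try to optimize them.
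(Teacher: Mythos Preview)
The paper does not actually prove this lemma here; it states it and refers to \cite[Lemma 2.7]{DGHT} for the proof. So there is no in-paper argument to compare against, only the referenced one, which does follow the partition-and-iterate scheme you describe.

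That said, your sketch contains a concrete error. The ``linearization'' step is backwards: for $\kappa\ge 1$ and $x\in[0,L]$ one has $x^{1/\kappa}\ge L^{1/\kappa-1}x$, not $\le$ (just compare at $x=L/2$, say). So you cannot replace $\omega_1(s,t)^{1/\kappa}$ by a constant times $\omega_1(s,t)$ in the direction you need. This step is also unnecessary: the usual argument partitions $[0,T]$ into pieces with $\omega_1(\tau_i,\tau_{i+1})\le \alpha L$ (not $L$), so that on each piece $C\,\omega_1(\tau_i,\tau_{i+1})^{1/\kappa}\le C(\alpha L)^{1/\kappa}\le \tfrac{1}{2e^2}$ by the very definition of $\alpha$. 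This smallness is what lets you absorb the running supremum and get a clean geometric growth factor (of order $2$, say) per step; superadditivity then bounds the number of steps by roughly $\omega_1(0,T)/(\alpha L)$, producing the exponential. Your choice of partition with $\omega_1=L$ per piece would give too few pieces to match the stated exponent and too large a coefficient per step to close the absorption.

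A minor point: you introduce the discounted path $H_t=G_t\exp(-\omega_1(0,t)/(\alpha L))$ but then never use it; the argument you actually sketch is a direct induction on $\sup_{[0,\tau_i]}G$, which is fine and is what the referenced proof does. Drop the $H_t$ detour, fix the partition scale to $\alpha L$, and replace the faulty linearization by the trivial bound $\omega_1^{1/\kappa}\le(\alpha L)^{1/\kappa}$ on each piece, and your outline becomes the standard proof.
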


Finally, we have all in hand to derive the desired a priori estimate for a solution to the above heat equation.
Indeed, taking the test function $\varphi\equiv 1$ in  \eqref{u2} and applying Theorem \ref{prop:apriori} we  obtain
\begin{align*}
&(\delta\|u\|_{L^2}^2)_{st}+2\int_s^t\|\nabla u_r\|_{L^2}^2\,\dd r
\lesssim \sup_{s\leq r\leq t}\|u_r\|_{L^2}^2\,\omega_A(s,t)^{\frac{1}{p}}
+\omega_\mu(s,t) \omega_A(s,t)^{\frac{3-p}{p}}\\
&\qquad\lesssim \sup_{s\leq r\leq t}\|u_r\|_{L^2}^2\bigg[\omega_A(s,t)^{\frac{1}{p}}+|t-s|\,\omega_A(s,t)^{\frac{3-p}{p}}\bigg]+\int_s^t \|\nabla u_r\|_{L^2}^2 \,\dd r\,\omega_A(s,t)^{\frac{3-p}{p}}.
\end{align*}
Hence Lemma \ref{lemma:basic-rough-gronwall} applies with
$$G_t:=\|u_t\|_{L^2}^2+2\int_0^t \|\nabla u_r\|_{L^2}^2\,\dd r,\quad\omega_1(s,t):=\omega_A(s,t)^{\frac{\kappa}{p}}+|t-s|^{\kappa}\omega_A(s,t)^{\frac{(3-p)\kappa}{p}}+\omega_A(s,t)^{\frac{(3-p)\kappa}{p}},$$
$\omega_2(s,t)=0$, and $\kappa = \min(p,p/(3-p)) \geq 1$, and yields
\begin{align*}
\sup_{0\leq t\leq T}\|u_t\|_{L^2}^2+\int_0^T\|\nabla u_t\|_{L^2}^2\,\dd t\lesssim \exp\Big( \frac{\omega_1(0,T)}{\al L}\Big) \, \|u_0\|_{L^2}^2.
\end{align*}
With these a priori estimates one can immediately proceed to the proof of existence of a weak solution. It does not rely on the Banach fixed point argument but rather on compactness of suitable approximate solutions. Therefore we are able to separate the proof of existence from the proof of uniqueness, which is needed for many problems of interest and in particular for reflected rough differential equations discussed in the following section.

Let us point out that at the current stage we are not able to treat PDEs with nonlinear noise terms, such as
$$\dif u =\Delta u\,\dif t+ f(u)\,\dif X,$$
where $f$ is a sufficiently regular function. Even though such a nonlinear noise can be treated in the finite dimensional framework, for instance as shown in Section \ref{sec:uniq}, in the above PDE setting it is an open question how to derive energy estimates from the corresponding variational formulation.

\section{Uniqueness for reflected rough differential equations}
\label{sec:uniq}

Let us present another application of the above rough Gronwall lemma argument from a seemingly rather different field. We are interested in the one-dimensional RDE reflected at $0$, which can be described as follows: given a time $T > 0$, a smooth function $f :\mathbbm{R} \to \mathcal{L} (\mathbbm{R}^d ;\mathbbm{R})$ and a continuous $p$-rough path ${X}$ with $p\in[2,3)$, find an  path $y\in V^p_1 ([0, T];\mathbbm{R}_{\geqslant 0})$ and an  increasing function (the so-called reflection measure) $m\in  V^1_1 ([0, T];\mathbbm{R}_{\geqslant 0})$ that together satisfy
\begin{equation}\label{rrde-informal}
\mathd y _t = f (y_t)\, \mathd {X}_t + \mathd m_t  , \qquad y_t \,\mathd m_t=0 .
\end{equation}
Thus, the idea  is to exhibit a path $y$ that somehow follows the dynamics in \eqref{eq:}, but is also forced to stay nonnegative thanks to the intervention of some regular {local time} $m$ at $0$.

This problem was studied  by Aida \cite{aida_reflected_2015,aida-2016}, however, only existence of a solution  was established and the uniqueness issue was left open. The main difficulty lies in the lack of regularity of the corresponding Skorokhod map which does not allow to treat the problem via the Banach fixed point theorem. Consequently, it is necessary to study existence and uniqueness separately and a suitable Gronwall lemma argument is needed to establish uniqueness.
It turns out that the above introduced framework is well suited for this task: the existence can be proved with significant simplifications and, in addition, uniqueness follows from the rough Gronwall lemma argument by contradiction.

To be more precise, we formulate the problem as follows.

\begin{definition}\label{def:sol-rrde}
Let $y_{\text{in}}> 0$, let  $f :\mathbbm{R} \to \mathcal{L} (\mathbbm{R}^d ;\mathbbm{R})$ be a differentiable function and let $X$ be a continuous $p$-rough path with $p\in[2,3)$. A pair $(y, m)\in  V^p_1 ([0, T] ; \mathbbm{R}_{\geqslant 0}) \times V^1_1 ([0, T] ;\mathbbm{R}_{\geqslant 0})$ solves the problem (\ref{rrde-informal}) on $[0,T]$ with initial condition $y_{\text{in}}$ if there exists a 2-index map $y^\natural \in V^{p/3}_{2, \text{loc}}([0,T];\mathbbm{R})$ such that for all $s,t\in [0,T]$, we have
\begin{equation}
\begin{split}
   \delta y_{s t} = f (y_s) {X}^{1}_{s t} + f_{2} (y_s)
{X}^{2}_{s t} + \delta m_{s t} +y^\natural_{st},\qquad y_0=y_{\text{in}}, \qquad  m_t=\int_0^t \mathbf{1}_{\{ y_u=0\}}  \mathd m_u ,
\end{split}
\end{equation}
where we have set $f_{2,ij} \assign f'_i f_j $ and $m([0,t]):=m_t$.
\end{definition}

With this interpretation in hand, the well-posedness result proved in \cite[Theorem 4]{DGHT2} reads as follows.

\begin{theorem} \label{th:main-wellposedness}
If $f\in \mathcal{C}_b^3(\mathbbm{R};\mathcal{L}(\mathbbm{R}^d,\mathbbm{R}))$, then problem~\eqref{rrde-informal} admits a unique solution.
\end{theorem}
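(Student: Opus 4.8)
The plan is to split the statement into existence and uniqueness, and to handle uniqueness first since this is the part that the rough Gronwall lemma was designed to address. For uniqueness I would argue by contradiction: suppose $(y,m)$ and $(\bar y,\bar m)$ are two solutions on $[0,T]$ with the same initial datum $y_{\text{in}}>0$, and let $z := y-\bar y$. Taking the difference of the two equations of Definition~\ref{def:sol-rrde}, $z$ satisfies a linear-type rough equation
\begin{equation*}
\delta z_{st} = \big(f(y_s)-f(\bar y_s)\big)X^1_{st} + \big(f_2(y_s)-f_2(\bar y_s)\big)X^2_{st} + \delta(m-\bar m)_{st} + z^\natural_{st}.
\end{equation*}
The key structural point — this is where the reflection is exploited — is that although $m$ and $\bar m$ are only of bounded variation and not small, the \emph{combination} that enters the energy estimate for $z^2$ cancels: using $y_u\,\dd m_u=0$ and $\bar y_u\,\dd\bar m_u=0$ one gets $\int z_u\,\dd(m-\bar m)_u = -\int \bar y_u\,\dd m_u - \int y_u\,\dd\bar m_u \leq 0$ since $y,\bar y\geq 0$ and $m,\bar m$ are increasing. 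So when one derives the equation for $z^2$ (rigorously via smooth approximations, or via the tensorization/bracket arguments of the rough-path calculus for geometric rough paths, exactly as \eqref{u2} was obtained) the drift coming from the reflection measures has a favorable sign and can be dropped. What remains is an equation of the form $\delta(z^2)_{st}(\varphi) = (\text{drift}\leq 0) + z^2_s(A^{1,*}_{st}\varphi)+z^2_s(A^{2,*}_{st}\varphi)+z^{2,\natural}_{st}(\varphi)$, where now $A=(A^1,A^2)$ is the (bounded, since $f\in\mathcal C_b^3$) rough driver built from the finite differences of $f$ and $f_2$ along the paths $y,\bar y$ — here one uses that $f'$, $f''$, $f'''$ are bounded so that $|f(y_s)-f(\bar y_s)|\lesssim |z_s|$ and similarly $|f_2(y_s)-f_2(\bar y_s)|\lesssim|z_s|$, which is what makes the driver bounded in terms of $z$ itself and closes the loop.

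With that in place, Theorem~\ref{prop:apriori} applies (in the finite-dimensional scale $E_n=\R$ with the trivial embeddings, or rather the $\mathcal C^n$-type scale used to absorb the nonlinearity) with $\omega_\mu=0$, giving $\|z^{2,\natural}_{st}\|\lesssim \|z^2\|_{L^\infty(s,t)}\,\omega_A(s,t)^{3/p}$ on intervals with $\omega_A$ small. Feeding this back into the $z^2$-equation with test function $\varphi\equiv 1$ produces exactly the hypothesis of Lemma~\ref{lemma:basic-rough-gronwall} for $G_t := \sup_{r\le t}|z_r|^2$ (or $|z_t|^2$) with $\omega_2=0$ and $G_0=0$; the conclusion of that lemma then forces $G\equiv 0$, i.e. $y=\bar y$ on $[0,T]$, and then $\delta m_{st}=\delta\bar m_{st}$ from the equation, so $m=\bar m$ as well.

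For existence I would invoke Aida's results \cite{aida_reflected_2015,aida-2016} — a solution in the sense of Definition~\ref{def:sol-rrde} is known to exist — or, following \cite{DGHT2}, reprove it by the penalization/compactness scheme: solve the penalized RDEs $\dd y^n = f(y^n)\,\dd X + n(y^n)^-\,\dd t$ (these are standard rough differential equations, well-posed by Lyons' theory since $f\in\mathcal C_b^3$), obtain uniform $p$-variation bounds on $y^n$ and uniform total-variation bounds on $m^n_t:=\int_0^t n(y^n_r)^-\,\dd r$ using the a~priori machinery, extract a convergent subsequence, and pass to the limit in the intrinsic relation, checking the complementarity condition $\int \ind_{\{y_u=0\}}\,\dd m_u = m_t$ in the limit. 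The main obstacle is the uniqueness half, and within it the delicate point is the rigorous derivation of the equation for $z^2$ together with the sign analysis of the reflection term: one must be careful that the geometric rough-path chain rule is being applied to $z=y-\bar y$ which is itself only a controlled path (with Gubinelli derivative $f(y)-f(\bar y)$), so the "square" has remainder in $V^{p/3}_{2,\text{loc}}$ only after the $f\in\mathcal C_b^3$ Lipschitz-type bounds are used, and that the bounded-variation terms $m,\bar m$ are correctly kept as genuine increments (not Riemann-sum approximated) so that their contribution to the $z^2$-equation is exactly $2\int z_u\,\dd(m-\bar m)_u\le 0$ rather than something with an uncontrolled sign.
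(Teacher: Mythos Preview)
Your uniqueness argument has a genuine gap at the point where you claim $\omega_\mu=0$ in the application of Theorem~\ref{prop:apriori}. The drift in the equation for $z^2$ is $2\int_s^t z_u\,\dd(m-\bar m)_u$, which is indeed $\leq 0$, but it is not zero, and more importantly the remainder $z^{2,\natural}$ itself picks up contributions from the reflection measures. When you run the sewing estimate for the rough integral $\int z_u\big(f(y_u)-f(\bar y_u)\big)\,\dd X_u$, the increments $\delta y_{su}$ and $\delta \bar y_{su}$ enter, and these contain $\delta m_{su}$, $\delta\bar m_{su}$. The upshot is a bound of the form
\[
|z^{2,\natural}_{st}|\ \lesssim\ \|z^2\|_{L^\infty(s,t)}\,\omega_A(s,t)^{3/p}\ +\ \|z\|_{L^\infty(s,t)}\,\omega_M(s,t)\,\omega_A(s,t)^{1/p},
\]
with $\omega_M(s,t)=\|m\|_{\overline V^1_1([s,t])}+\|\bar m\|_{\overline V^1_1([s,t])}$. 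The second term scales like $\sqrt{G}$ rather than $G$ when $G=z^2$, so after Young's inequality the rough Gronwall only yields $\sup G\lesssim \omega_M^2\,\omega_A^{2/p}$, which does not force $z\equiv 0$. Dropping the nonpositive drift does not help here because it gives you no control over $\omega_M$ to absorb these terms.

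This is precisely why the paper works with $|y-\bar y|$ rather than $(y-\bar y)^2$. Approximating $|\xi|$ by $\varphi_\varepsilon(\xi)=\sqrt{\varepsilon+\xi^2}$ and passing to the limit, the measure contribution to the drift becomes $\int_s^t \operatorname{sgn}(y_u-\bar y_u)\,\dd(m-\bar m)_u=-\omega_M(s,t)+\int_s^t \ind_{\{y_u=\bar y_u\}}\,\dd(m_u+\bar m_u)$. The full $-\omega_M(s,t)$ is moved to the left-hand side and becomes part of $G$, so the $\omega_M$-terms in the remainder estimate are absorbed into $(\sup G)\,\omega_1^{1/\kappa}$. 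What survives on the right is only $\int_s^t\ind_{\{y_u=\bar y_u\}}\,\dd(m_u+\bar m_u)$, and this is killed by a contradiction argument: on any interval $(s,t)$ with $y\neq\bar y$ in the interior but $y_s=\bar y_s$, that integral vanishes, forcing $\sup_{[s,t]}|y-\bar y|=0$. The need for the $\varphi_\varepsilon$-regularisation (and the careful check that only the combination $|\varphi_\varepsilon'|+|\xi||\varphi_\varepsilon''|+|\xi|^2|\varphi_\varepsilon'''|$ must stay bounded) is the technical price for extracting the full $\omega_M$ with the right sign.
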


\begin{proof}[Sketch of the proof of uniqueness]
The proof proceeds in several steps, we will only present the main ideas while omitting the technical details. We refer the reader to \cite[Theorem 5]{DGHT2} for the complete proof.

Let us consider two solutions, say $(y,\mu)$ and $(z,\nu)$. In order to show that they coincide, we naturally look at the equation satisfied by the difference $y-z$. Denote $Y=(y,z)$ and let $\varphi:\R\to\R_{\geq0}$ be a smooth function. Then a direct computation via Taylor expansion shows for  $h(Y):=\varphi(y-z)$ that
  \begin{equation}
    \delta h (Y)_{s t} = H_i (Y_s) {X}^{1,i}_{s t} + H_{2,ij} (Y_s)
{X}^{2,ij}_{s t}  + \int_s^t \varphi' (y_u - z_u) (\mathd \mu_u - \mathd \nu_u)+ h^{\natural}_{s t},
    \label{eq:h-dyn}
  \end{equation}
  where $h^{\natural}$ is a map in $V^{p/3}_{2}([0,T]; \mathbbm{R})$ and where we have set for all $Y=(y,z)\in \mathbbm{R}^2$
\begin{align*}\label{eq:c}
\begin{aligned}
H_i (Y) &\assign \varphi' (y - z) (f_i (y) - f_i (z)) ,\\
H_{2,ij} (Y) &\assign\varphi' (y - z) (f_{2,ij} (y) - f_{2,ij} (z)) 
+\varphi'' (y - z) (f_i (y) - f_i (z))(f_j (y) - f_j (z))  . 
\end{aligned}
\end{align*}
Our  goal is to deduce a corresponding formula for $\varphi(\xi)=|\xi|$ and to finally show with the help of the rough Gronwall lemma that $|y_t-z_t|$ vanishes for all $t\in[0,T].$ However, since the absolute value function is not smooth at zero, \eqref{eq:h-dyn} does not apply directly and it is necessary to consider a smooth approximation first and then pass to the limit. Namely, we define $\varphi_\varepsilon(\xi)=\sqrt{\varepsilon+|\xi|^2}$ and note that
 \[ | \varphi_{\varepsilon}' (\xi) | \leqslant 1\ , \qquad |
     \varphi_{\varepsilon}'' (\xi) | \leqslant 1 / \sqrt{\varepsilon^2 + | \xi
     |^2} \ , \qquad | \varphi_{\varepsilon}''' (\xi) | \leqslant 3 /
     (\varepsilon^2 + | \xi |^2). \]
Even though the second and the third derivative blow up as $\varepsilon\to 0$, a detailed computation  in the proof of  \cite[Theorem 5]{DGHT2} shows that in order to obtain estimates uniform in $\varepsilon$, it is enough to control the quantity
\begin{equation*}\label{defi:norm-trip}
 \sup_{\varepsilon\in (0,1)}\sup_{y, z \in \R} \big(| \varphi_\varepsilon' (y - z) | + | y - z
     | | \varphi_\varepsilon'' (y - z) | + | y - z |^2 | \varphi_\varepsilon''' (y - z) |\big) .
\end{equation*}
Since this is indeed finite, we obtain estimates uniform in $\varepsilon$ and finally  pass to the limit as $\varepsilon\to0$.
This ensures the existence of a limiting remainder $\Phi^{\natural}$  and we get that the path $\Phi (Y) \assign | y - z |$ satisfies
\begin{equation*}\label{eq:phi-y}
\delta \Phi (Y)_{st} = \Psi_i (Y_s) {X}^{1,i}_{st} + \Psi_{2,ij} (Y_s) {X}^{2,ij}_{st} - \omega_M (s, t) + \int_s^t \mathbf{1}_{\{y_u = z_u\}}\mathd (\mu_u + \nu_u)+\Phi^{\natural}_{st},
\end{equation*}
where 
  \[ \Psi_i (Y) \assign \tmop{sgn} (y - z) (f_i (y) - f_i (z)) , \qquad \Psi_{2,ij} (Y) \assign \tmop{sgn} (y - z) (f_{2,ij} (y) - f_{2,ij} (z))  \]
  and
  \[
  \omega_M (s, t)\assign\| \mu\|_{\overline{V}^1_1 ([s,t] )} +\|\nu\|_{\overline{V}^1_1 ([s,t] )}.\]
In addition, further estimations in the spirit of Theorem \ref{prop:apriori} show  an apriori estimate for the remainder $\Phi^\natural $ of the form
$$|\Phi^\natural_{st}|\lesssim\|y-z\|_{L^\infty(s,t)}\,\omega_1(s,t)^{3/p}+\omega_M(s,t)\omega_2(s,t)^{1/p},$$
for some regular controls $\omega_1,\omega_2$.
Consequently, we are in a position to apply the Rough Gronwall Lemma \ref{lemma:basic-rough-gronwall}  and assert that
		  \[ \sup_{r\in [s, t]} |y_r-z_r| + \omega_M (s, t) \lesssim | y_s-z_s| +
     \int_s^t \mathbf{1}_{\{y_u = z_u\}} (\mathd \mu_u + \mathd \nu_u) . \]
		
		\smallskip
		
\noindent
Assume now that $[s, t]$ is an interval where $y \neq z$ in $(s, t)$ but $y_s
  = z _s$. Then
  \[ \sup_{r\in [s, t]} |y_r-z_r| + \omega_M (s, t) \leqslant 0 \]
  which implies that $\sup_{r\in [s, t]} |y_r-z_r| = 0$ everywhere so we find a
  contradiction and such interval cannot exist. This concludes the proof of
  uniqueness.
\end{proof}

\def\ocirc#1{\ifmmode\setbox0=\hbox{$#1$}\dimen0=\ht0 \advance\dimen0
  by1pt\rlap{\hbox to\wd0{\hss\raise\dimen0
  \hbox{\hskip.2em$\scriptscriptstyle\circ$}\hss}}#1\else {\accent"17 #1}\fi}

\end{document}